\documentclass{amsart}
\usepackage{amsmath}
\usepackage{amsfonts}
\usepackage{amssymb}
\usepackage[utf8]{inputenc}
\usepackage{amssymb}
\usepackage{mathrsfs}
\usepackage{amsthm}
\usepackage{xcolor}
\usepackage{float}
\usepackage{tikz-cd}

\usepackage{enumerate}

\usepackage{graphicx}

\newtheorem{thrm}{Theorem}[section]
\newtheorem{cor}[thrm]{Corollary}
\newtheorem{lemma}[thrm]{Lemma}

\newtheorem{question}[thrm]{Question}

\newtheorem{claimnumber}{Claim}



\theoremstyle{definition}
\newtheorem{defin}[thrm]{Definition}




\newcommand{\NN}{\mathbb{N}}
\newcommand{\RR}{\mathbb{R}}

\begin{document}

\title[Every Lindel\"of scattered subspace of $\Sigma(\mathbb{R}^\Gamma)$ is $\sigma$-compact]{Every Lindel\"of scattered subspace of a $\Sigma$-product of real lines is $\sigma$-compact}
\author{Antonio Avil\'es}
\address{Universidad de Murcia, Departamento de Matem\'{a}ticas, Campus de Espinardo 30100 Murcia, Spain.} \email{avileslo@um.es}
\author{Miko\l aj Krupski}
\address{Universidad de Murcia, Departamento de Matem\'{a}ticas, Campus de Espinardo 30100 Murcia, Spain\\ and \\ Institute of Mathematics\\ University of Warsaw\\ ul. Banacha 2\\
02--097 Warszawa, Poland }
\email{mkrupski@mimuw.edu.pl}

\begin{abstract}
We prove that every Lindel\"of scattered subspace of a $\Sigma$-product of first-countable spaces is $\sigma$-compact. In particular, we obtain
the result stated in the title. This answers some questions of Tkachuk from [Houston J. Math. 48 (2022), no. 1, 171--181].
\end{abstract}

\subjclass[2020]{Primary: 54B10, 54D20, 54D45, 54G12}

\keywords{$\Sigma$-product, scattered space, Lindel\"of space, Eberlein compact}

\maketitle

\section{Introduction}

All spaces under consideration are Tychonoff topological spaces.
In the present note we are concerned with Lindel\"of scattered spaces. Recall that a space $X$ is \textit{scattered} if every nonempty subset of $X$ has a relative isolated point. Although Lindel\"of scattered spaces need not be $\sigma$-compact (consider the one point Lindel\"ofication $L(\omega_1)$ of an uncountable discrete set), they are quite close to being $\sigma$-compact. Indeed, it is well known that any Lindel\"of scattered space is a Menger space (see \cite[T.129]{Tk1} and \cite[Theorem 2.7]{Ar}). Actually, one can show that any such space is even Hurewicz\footnote{A space $X$ is \textit{Menger} (resp., \textit{Hurewicz})
if for every sequence $(\mathscr{U}_n)_{n\in \NN}$ of open covers of $X$, there is a sequence $(\mathscr{V}_n)_{n\in\NN}$
such that for every $n$, $\mathscr{V}_n$ is a finite subfamily of $\mathscr{U}_n$ and the family $\bigcup_{n\in\NN} \mathscr{V}_n$ covers $X$
(resp., every point of $X$ is contained in $\bigcup\mathscr{V}_n$ for all but finitely many $n$'s).}; this is a direct consequence of \cite[T.129]{Tk1} and \cite[Theorem 3.2]{K}. One may wonder under what additional conditions a Lindel\"of scattered space is actually $\sigma$-compact. It was shown by Tkachuk in \cite{Tk} that any Lindel\"of scattered subset of a $\sigma$-product of first-countable spaces is $\sigma$-compact. It remained unclear, however, what happens with Lindel\"of scattered subsets of Eberlein compacta. Indeed, the main question that was left open in \cite{Tk} is the following one:

\begin{question}\cite[Question 4.1]{Tk}\label{main question}
Is every Lindel\"of scattered subset of an Eberlein compact space $\sigma$-compact?
\end{question}

Let us recall that a compact space $K$ is \textit{Eberlein} compact
if $K$ is homeomorphic to a weakly compact subset of a Banach space. This is equivalent to saying that, for some $\Gamma$, the space $K$ is homeomorphic to a compact subset of
$\{x\in \RR^\Gamma :\forall \varepsilon>0\;|\{\gamma\in \Gamma:x(\gamma)>\varepsilon\}|<\omega\}$. For a given family of spaces $\{Y_\gamma:\gamma\in \Gamma\}$ and an element $a\in \prod_{\gamma\in \Gamma} Y_\gamma$, the \textit{$\Sigma$-product} $\Sigma(\prod_{\gamma\in \Gamma}Y_\gamma, a)$ is the following subset of the product of the family $\{Y_\gamma:\gamma\in \Gamma\}$:
$$\Sigma(\prod_{\gamma\in \Gamma}Y_\gamma, a)=\Bigl\{y\in \prod_{\gamma\in \Gamma}Y_\gamma:|\{\gamma\in \Gamma:y(\gamma)\neq a(\gamma)\}|\leq\omega\Bigr\}.$$

The aim of this note is to give an affirmative answer to Question \ref{main question}. In fact, we shall prove a much stronger result that settles a number of open questions posed in \cite{Tk}. Our main result reads as follows:

\begin{thrm}\label{main}
 If $X$ is a Lindel\"of scattered subspace of a $\Sigma$-product $\Sigma(\prod_{\gamma\in \Gamma}Y_\gamma, a)$, of first-countable spaces $Y_\gamma$, then $X$ is $\sigma$-compact.
\end{thrm}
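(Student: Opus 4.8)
The plan is to argue by transfinite induction on the Cantor--Bendixson height $h(X)$ of $X$ (recall $h(X)$ is well defined since $X$ is scattered), proving along the way that every $\sigma$-compact-looking reduction stays inside the class of Lindel\"of scattered subspaces of $\Sigma$-products. Two standing facts will be used throughout. First, $X$ is Hurewicz, hence Lindel\"of, hence (being regular) paracompact and normal; closed discrete subsets of $X$ are therefore countable, and every countable open cover of $X$ has a countable locally finite closed refinement. Second, the $\Sigma$-product structure is exploited through supports: every point has countable support, and if $D\subseteq X$ is countable then $D$ lies in the \emph{closed} subspace $\{y:\supp(y)\subseteq S\}\cong\prod_{\gamma\in S}Y_\gamma$ with $S=\bigcup_{d\in D}\supp(d)$ countable, which is a countable product of first-countable spaces and so first-countable. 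Consequently closures of separable subspaces of $X$ are first-countable, and I will use the elementary lemma that a \emph{first-countable Lindel\"of scattered space is countable} (points are $G_\delta$, whence by induction every Cantor--Bendixson level is countable and the height is countable). I also note that in a Lindel\"of space a strictly decreasing chain of closed sets with empty intersection has cofinality $\omega$, so the only limit heights that occur have countable cofinality.

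\textbf{Base and limit steps.} If $h(X)=1$ then $X$ is discrete and Lindel\"of, hence countable and $\sigma$-compact. If $h(X)=\lambda$ is a limit ordinal, write $\lambda=\sup_n\alpha_n$ with $\alpha_n$ strictly increasing; then $\{X\setminus X^{(\alpha_n)}:n\in\NN\}$ is an open cover of $X$ because $\bigcap_n X^{(\alpha_n)}=X^{(\lambda)}=\varnothing$. By paracompactness and Lindel\"ofness it has a countable locally finite closed refinement $\{C_n:n\in\NN\}$. Each $C_n$ is closed in $X$, hence Lindel\"of and scattered, and $C_n\subseteq X\setminus X^{(\alpha_{k}) }$ for some $k$, so (using $C_n^{(\mu)}\subseteq X^{(\mu)}\cap C_n$ for closed $C_n$) $h(C_n)\le\alpha_k<\lambda$; by the inductive hypothesis $C_n$ is $\sigma$-compact, and therefore so is $X=\bigcup_n C_n$.

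\textbf{Successor step; reduction.} Let $h(X)=\beta+1$, so $D:=X^{(\beta)}$ is closed and discrete, hence countable, say $D=\{p_n:n\in\NN\}$. Using normality (a Tietze function separating the $p_n$) fix pairwise disjoint open sets $O_n\ni p_n$, and shrink to open $W_n$ with $p_n\in W_n\subseteq\overline{W_n}\subseteq O_n$ and $\overline{W_n}\cap D=\{p_n\}$. Then $X\setminus\bigcup_n W_n$ is closed, Lindel\"of, scattered, and misses $X^{(\beta)}$, so it has height $\le\beta$ and is $\sigma$-compact by the inductive hypothesis; and $X=(X\setminus\bigcup_n W_n)\cup\bigcup_n\overline{W_n}$. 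Hence it suffices to treat each $\overline{W_n}$, i.e.\ it suffices to prove: \emph{a Lindel\"of scattered subspace $Z$ of a $\Sigma$-product of first-countable spaces with $Z^{(\beta)}=\{p\}$ a single point is $\sigma$-compact.} Here the inductive hypothesis applies to every closed $Z\setminus V$ with $V\ni p$ open (height $\le\beta$), so each such set is $\sigma$-compact; taking $V=\{z:z(\gamma)\in W\}$ for $W$ running through a countable local base at $p(\gamma)$ in $Y_\gamma$ shows that $\{z\in Z:z(\gamma)\neq p(\gamma)\}$ is $\sigma$-compact for every coordinate $\gamma$, and since $\supp(p)$ is countable, $\{z\in Z:z\!\restriction_{\supp(p)}\neq p\!\restriction_{\supp(p)}\}$ is $\sigma$-compact. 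Splitting this off, we may assume $Z$ agrees with $p$ on $\supp(p)$; equivalently $\supp(p)\subseteq\supp(z)$ for all $z\in Z$, and $z\mapsto\supp(z)\setminus\supp(p)$ records the "new'' coordinates.

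\textbf{The crux, and the main obstacle.} It remains to show this normalized single-top-point $Z$ is $\sigma$-compact, and this is where the full strength of the $\Sigma$-product is needed. When $\beta=1$ the argument is clean: the bipartite "support graph'' on (isolated points of $Z$)\,$\sqcup$\,(coordinates outside $\supp(p)$), with $z$ joined to $\gamma$ when $\gamma\in\supp(z)$, has countable degree on both sides (degree on the coordinate side is countable because $\{z:z(\gamma)\neq p(\gamma)\}$ is closed discrete, hence countable), so its connected components are countable; each component's closure is a first-countable Lindel\"of scattered space, hence countable, hence $\sigma$-compact, and distinct components use disjoint coordinates, which lets one reassemble the component-wise compact pieces into countably many compact "long convergent sequences'' to $p$ covering $Z$. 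For general $\beta$ the coordinate-side degree is only $\sigma$-compact rather than countable, so the components can be uncountable and this combinatorial argument breaks down; here one must instead feed the Hurewicz property of $Z$ into a closing-off over coordinates --- using the covers $\{\{z:z(\gamma)\in W^m_\gamma\}:\gamma\notin\supp(p)\}$ for a local-base index $m$ --- to produce a countable set $T$ of coordinates over which the $\sigma$-compact "slices'' $\{z:z(\gamma)\neq p(\gamma)\}$ and the behaviour of $Z$ are organized into one countable family of compact sets. I expect exactly this last point --- turning the interplay of Lindel\"of/Hurewicz with the combinatorics of countable supports into a single countable compact cover in the presence of uncountably many essential coordinates --- to be the main technical obstacle of the proof.
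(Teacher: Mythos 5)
Your induction set-up, the limit step, and the reduction of the successor step to a single top point $p$ (after splitting off the $\sigma$-compact slices $\{z\in Z: z(\gamma)\neq p(\gamma)\}$ for $\gamma\in\supp(p)$) are correct and closely parallel the paper's own scheme. The problem is that the argument stops exactly where it becomes hard: you explicitly defer the general-$\beta$ case of the single-top-point space $Z$ with $Z^{(\beta)}=\{p\}$ as ``the main technical obstacle'', and the sketch offered in its place (a closing-off over coordinates driven by the Hurewicz property) is neither carried out nor obviously workable. Even your $\beta=1$ discussion has a real gap at the reassembly step: a component closure $C\cup\{p\}$ is countable and metrizable, hence trivially $\sigma$-compact, but writing uncountably many such pieces as countably many \emph{compact} sets is precisely the kind of step that fails for the one-point Lindel\"ofication $L(\omega_1)$ (a height-$2$ Lindel\"of scattered non-$\sigma$-compact space), so it cannot be waved through; some quantitative use of the $\Sigma$-product structure is required already there.

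For comparison, here is how the paper closes this step. Embed each $Y_\gamma$ into a compactification $K_\gamma$, use first countability at $a_\gamma=p(\gamma)$ to pick open $W_{\gamma,n}$ with $\overline{W_{\gamma,n+1}}\subseteq W_{\gamma,n}$ and $\bigcap_n\overline{W_{\gamma,n}}=\{a_\gamma\}$, and cover $\prod_\gamma K_\gamma\setminus\{a\}$ by the open sets $U_{\xi,n}=\{z: z(\xi)\notin\overline{W_{\xi,n}}\}$; each $\overline{U_{\xi,n}}\cap X$ misses $p$, hence is $\sigma$-compact by the inductive hypothesis, and every point of the $\Sigma$-product lies in the closures of only countably many members of this cover. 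The two ingredients missing from your proposal are then: (i) an Alster-type refinement lemma (Lemma 2.4 of the paper, proved by induction on $|\mathscr{U}|$ using the Cantor--Bendixson tops of finite intersections of the compact pieces), which replaces this cover by a refinement $\mathscr{V}$ with the same union, $\sigma$-compact traces on $X$, and a decomposition into countable blocks whose unions form a point-finite family on $X$; and (ii) Tkachuk's lemma that a family of $G_\delta$-sets disjoint from $X$ admitting a point-finite open expansion in $X$ is contained in a single $G_\delta$ disjoint from $X$. Applying (ii) to the sets $G_t=\bigl(\bigcup\mathscr{V}_t\bigr)\setminus X$ shows that $\prod_\gamma K_\gamma\setminus X$ is a $G_\delta$ in a compact space, so $X$ is $\sigma$-compact. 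Without something playing the role of (i) --- which is the genuinely new content of the paper --- the proposal does not close.
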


In particular we obtain the following:

\begin{cor}\label{main_cor}
 If $X$ is a Lindel\"of scattered subspace of a $\Sigma$-product $\Sigma(\mathbb{R}^\Gamma)$, then $X$ is $\sigma$-compact.
\end{cor}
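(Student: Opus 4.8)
\section*{Proof strategy for Theorem \ref{main}}

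The plan is to argue by induction on the number of coordinates, with the first-countable case as the base and a reduction to $\Sigma$-products over strictly fewer coordinates as the inductive engine. The first step is a reduction lemma. For a countable $S\subseteq\Gamma$ put $\Sigma_S=\{y\in\Sigma(\prod_\gamma Y_\gamma,a):y(\gamma)=a(\gamma)\text{ for all }\gamma\notin S\}$. Since each $Y_\gamma$ is $T_1$, $\Sigma_S$ is closed in the $\Sigma$-product, and with the subspace topology it is homeomorphic to the countable product $\prod_{\gamma\in S}Y_\gamma$, which is first-countable. Hence $X\cap\Sigma_S$ is first-countable, Lindel\"of (being closed in $X$) and scattered, and every first-countable Lindel\"of scattered space is countable --- a Cantor--Bendixson analysis, essentially the single-factor instance of the theorem (cf.\ \cite{Tk}) --- so $X\cap\Sigma_S$ is countable, hence $\sigma$-compact. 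Taking $S=\bigcup_{x\in X}\supp(x)$ when this set is countable settles the base case.

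Now suppose $\Gamma:=\bigcup_{x\in X}\supp(x)$ is uncountable. Write $\Gamma$ as an increasing continuous union $\Gamma=\bigcup_{\xi<\theta}\Gamma_\xi$ with $|\Gamma_\xi|<|\Gamma|$ and $\theta$ a regular cardinal (for singular $|\Gamma|$ of countable cofinality this costs some routine bookkeeping, but it can be arranged). Set $X_\xi:=X\cap\Sigma_{\Gamma_\xi}$. Each $X_\xi$ is closed in $X$, Lindel\"of, scattered, and lies in a $\Sigma$-product of first-countable spaces over the smaller index set $\Gamma_\xi$, so by the induction hypothesis it is $\sigma$-compact, and the family $(X_\xi)_{\xi<\theta}$ is increasing and continuous. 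If $\theta>\omega$ then every countable set --- in particular every $\supp(x)$ --- is absorbed by some $\Gamma_\xi$, so $X=\bigcup_{\xi<\theta}X_\xi$; and if $\theta=\omega$ one is likewise left with $X$ a countable union of the $X_\xi$, and we are done. The whole difficulty is concentrated in the case $\theta$ uncountable, which is handled by the following key lemma: \emph{a Lindel\"of Fr\'echet--Urysohn space that is an increasing continuous union of $\theta$-many ($\theta$ regular uncountable) closed $\sigma$-compact subspaces is $\sigma$-compact.} Here one uses that a $\Sigma$-product of first-countable spaces is Fr\'echet--Urysohn (via countable tightness together with the observation that countable subsets live in first-countable subproducts), a property inherited by $X$.

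I expect this key lemma to be the main obstacle. The intended argument is by contradiction and bootstrapping: if $X$ is not $\sigma$-compact then no $X_\xi$ equals $X$, and since the isolated points of $X$ are dense (scatteredness) their $\xi$-ranks must be cofinal in $\theta$, so one may pick isolated points $x_\xi$, $\xi$ in a cofinal set, with $x_\xi\in X_{\xi+1}\setminus X_\xi$. The uncountable set $E=\{x_\xi\}$ cannot be closed and discrete because $X$ is Lindel\"of; countable tightness and continuity of the filtration force any accumulation to take place ``inside some $X_\xi$'', and the Fr\'echet--Urysohn property excludes the $L(\omega_1)$-type behaviour (uncountably many isolated points clustering only at a point not reachable by sequences). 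One then shows $\overline E$ is $\sigma$-compact, peels it off (here scatteredness again lets one enlarge compacta by adjoining isolated points, and the Hurewicz property of Lindel\"of scattered spaces helps organize the countably many compact pieces), and iterates, eventually exhibiting a countable subfamily of $(X_\xi)$ that covers $X$ --- contradiction. Making this peeling-and-iteration precise is the delicate point of the whole proof.
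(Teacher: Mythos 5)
Your strategy (induction on $|\Gamma|$ via a filtration by sub-$\Sigma$-products) is genuinely different from the paper's (induction on the Cantor--Bendixson rank of $X$, combined with an Alster-type block-point-finite refinement lemma), and your base case is sound: a first-countable Lindel\"of scattered regular space is indeed countable, so $X\cap\Sigma_S$ is countable for countable $S$. But there are genuine gaps, concentrated exactly where you suspect. First, two of the reductions fail as stated. The filtration $(X_\xi)_{\xi<\theta}$ is not continuous: at a limit $\lambda$ of countable cofinality, a point of $X_\lambda$ may have its countable support cofinal in $\Gamma_\lambda=\bigcup_{\xi<\lambda}\Gamma_\xi$ without lying in any single $\Gamma_\xi$, so $X_\lambda\supsetneq\bigcup_{\xi<\lambda}X_\xi$ is possible; when $\theta=\omega_1$ every limit below $\theta$ has countable cofinality, so the continuity hypothesis of your key lemma is simply unavailable. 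The same phenomenon shows that for $\theta=\omega$ one does \emph{not} get $X=\bigcup_n X_n$ (a point whose support meets $\Gamma_{n+1}\setminus\Gamma_n$ for every $n$ lies in no $X_n$), so the singular-of-countable-cofinality case is not routine bookkeeping but an unhandled case.

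Second, and more seriously, the key lemma is asserted, not proved, and it is doubtful as stated: it makes no use of scatteredness, so it would yield $\sigma$-compactness of arbitrary Lindel\"of subspaces of $\Sigma(\mathbb{R}^\Gamma)$ admitting such a filtration, which there is no reason to believe; tellingly, your own sketch of its proof immediately reaches for scatteredness and the Hurewicz property, i.e., for the actual content of the theorem. The ``peeling-and-iteration'' paragraph is where the entire difficulty lives and contains no argument: it is not shown why $\overline{E}$ is $\sigma$-compact, why removing it preserves the hypotheses, or why the iteration stops after countably many steps rather than $\omega_1$ many (compare $L(\omega_1)$, an increasing union of $\omega_1$ closed countable sets that survives any countable amount of peeling; Fr\'echet--Urysohnness is invoked to exclude it, but no mechanism is given). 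The paper's proof indicates that the unavoidable combinatorial core is a point-finite refinement statement (Lemma \ref{Alster}), proved by a Cantor--Bendixson analysis of finite intersections of the compact pieces and then fed into Lemma \ref{lemma_Tkachuk} to exhibit the complement of $X$ in a compactification as a $G_\delta$; nothing in your sketch substitutes for that step. As it stands this is a plausible research plan, not a proof.
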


Here
$\Sigma(\RR^\Gamma)=\Sigma(\RR^\Gamma,\mathbf{0})=\left\{ x\in \RR^\Gamma:|\{\gamma\in \Gamma:x(\gamma)\neq 0\}|\leq \omega \right\}$.
Compact subsets of $\Sigma(\RR^\Gamma)$ are called \textit{Corson compacta}. Clearly, every Eberlein compact space is Corson compact. Thus we have:

\begin{cor}\label{main_cor_1}
If X is a Lindel\"of scattered subspace of a Corson compact
space, then X is $\sigma$-compact. In particular, all Lindel\"of scattered subspaces of
any Eberlein compact space are $\sigma$-compact.
\end{cor}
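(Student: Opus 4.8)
The plan is to prove the general statement, Theorem \ref{main}, from which the corollaries follow immediately since a Corson compact space is, by definition, a compact subset of some $\Sigma(\RR^\Gamma)$, and $\RR$ is first-countable. So fix a Lindel\"of scattered subspace $X\subseteq \Sigma(\prod_{\gamma\in\Gamma}Y_\gamma,a)$ with each $Y_\gamma$ first-countable. The natural strategy is an induction on the scattered height $\operatorname{ht}(X)$ of $X$. The Cantor--Bendixson derivatives $X^{(\alpha)}$ are closed in $X$, hence Lindel\"of, and each difference $X^{(\alpha)}\setminus X^{(\alpha+1)}$ is an open discrete subspace of $X^{(\alpha)}$. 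The successor step is easy: if $X'=X^{(1)}$ is $\sigma$-compact and $X\setminus X'$ is a countable union of... well, this is exactly where the difficulty lies, because $X\setminus X'$ being discrete and Lindel\"of makes it countable, so actually $X=X'\cup(\text{countable set})$ and the successor step reduces $\operatorname{ht}$; but the problem is that the scattered height can be an arbitrary countable ordinal and even the base case (height $2$, say) is already where Eberlein/Corson structure must be used. In fact I expect the key topological input to be: \emph{if $X$ is Lindel\"of and $X^{(1)}$ is compact, then $X$ is $\sigma$-compact}. This is false for general Lindel\"of scattered spaces? No --- in that situation $X\setminus X^{(1)}$ is open, discrete, and Lindel\"of, hence countable, so $X$ is the union of a compact set and a countable set, which is trivially $\sigma$-compact. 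So the real content is handling \emph{limit} levels of the Cantor--Bendixson hierarchy.

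Here is the more careful plan. By \cite[T.129]{Tk1} (quoted in the introduction) a Lindel\"of scattered space is Hurewicz, in particular Menger, so $X$ is Hurewicz; this is a hereditary-type property I will want to invoke for closed subspaces. The crucial point is to show that the Cantor--Bendixson height of $X$ is \emph{finite}, or at least that each $X^{(\alpha)}$ is $\sigma$-compact, proceeding by transfinite induction but needing a uniform argument at limit stages. For a limit ordinal $\lambda$ with $X^{(\lambda)}=\bigcap_{\alpha<\lambda}X^{(\alpha)}$: each $X^{(\alpha)}$ ($\alpha<\lambda$) is Lindel\"of scattered, assumed $\sigma$-compact by induction; I want $X^{(\lambda)}$ to be $\sigma$-compact too, but this requires controlling the "gaps" $X^{(\alpha)}\setminus X^{(\lambda)}$. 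The essential use of the $\Sigma$-product structure: a Lindel\"of subspace of a $\Sigma$-product has countable tightness and, more importantly, is contained in a smaller $\Sigma$-product in a controlled way; and any Lindel\"of $\Sigma$-subspace of a product of first-countable spaces has a strong property --- I suspect one wants to show that $X$, being Lindel\"of, "lives on countably many coordinates" in a suitable sense, or at least that its closure in the product is Corson/Eberlein-like and that closed subsets inherit the relevant structure. The heart of the matter should be a lemma of the form: \emph{a Lindel\"of scattered subspace of a $\Sigma$-product of first-countable spaces whose first derivative is $\sigma$-compact is itself $\sigma$-compact}, combined with a proof that the height is countable (automatic) and that the induction can be pushed through limits using the Hurewicz property to extract, from a countable cover of $X$ by the sets $X\setminus X^{(\alpha_n)}$ together with $X^{(\lambda)}$, a genuinely $\sigma$-compact decomposition.

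More concretely, the approach I would actually carry out: first reduce to the case where $X$ is dense-in-itself modulo a countable set is impossible since $X$ is scattered, so instead induct on $\operatorname{ht}(X)=\alpha+1$. If $\alpha=0$, $X$ is discrete Lindel\"of, hence countable, hence $\sigma$-compact. If $\alpha=\beta+1$ is a successor, $X^{(\beta)}$ is closed in $X$ hence Lindel\"of, scattered of height $2$, so $X^{(\beta)}=(X^{(\beta)})^{(1)}\cup(\text{countable})$ with $(X^{(\beta)})^{(1)}=X^{(\alpha)}=X^{(\beta+1)}$... I realize the indexing must be set up so that the inductive hypothesis applies to $X^{(1)}$, which has height $\alpha$. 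So: assume the theorem for scattered height $\le\alpha$; given $X$ of height $\alpha+1$, the derivative $X^{(1)}$ is closed in $X$, hence a Lindel\"of scattered subspace of the same $\Sigma$-product of height $\alpha$, so $\sigma$-compact by hypothesis; and $X\setminus X^{(1)}$ is open, discrete, Lindel\"of, hence countable; thus $X=X^{(1)}\cup(X\setminus X^{(1)})$ is $\sigma$-compact. The \textbf{only} genuinely hard case is when $\operatorname{ht}(X)$ is a \emph{limit} ordinal $\lambda$ --- then $X=\bigcup_{\alpha<\lambda}(X\setminus X^{(\alpha)})$ but this is an increasing union of length $\operatorname{cf}(\lambda)\le\omega$ (since $X$ Lindel\"of forces, via the Hurewicz/Lindel\"of property applied to the open cover $\{X\setminus X^{(\alpha)}:\alpha<\lambda\}$, that $\operatorname{cf}(\lambda)=\omega$ and in fact $X=\bigcup_n (X\setminus X^{(\alpha_n)})$ for some countable $\alpha_n\uparrow\lambda$). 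Each $X\setminus X^{(\alpha_n)}$ is \emph{open} in $X$ but not closed, so I cannot immediately apply induction. The fix: $X\setminus X^{(\alpha_n)}$ is itself Lindel\"of? Not necessarily --- an open subset of a Lindel\"of space need not be Lindel\"of. But here $X\setminus X^{(\alpha_n)}$ is $\sigma$-compact in a hereditary way because it is scattered of height $\alpha_n<\lambda$ \emph{and} it sits as an $F_\sigma$ (being $\bigcup_m K_m$-ish) --- actually an open scattered subspace of a Lindel\"of space: I expect one shows $X\setminus X^{(\alpha_n)}$ is Lindel\"of by noting it is a countable union of closed-in-$X$ scattered pieces, or by passing to the closure. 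This is the main obstacle, and resolving it is presumably what the authors' actual proof does via the $\Sigma$-product geometry (countable tightness, the "countable support" trick, or a Baire-category/elementary-submodel argument). Once each $X\setminus X^{(\alpha_n)}$ is shown to be $\sigma$-compact (by induction, once its Lindel\"ofness or a substitute is secured), $X$ is a countable union of $\sigma$-compact sets, hence $\sigma$-compact, closing the induction.
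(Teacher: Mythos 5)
Your reduction of the corollary to Theorem \ref{main} is correct, but your proposed proof of the theorem contains a fatal error at the successor step, and you have located the difficulty in the wrong place. You claim that $X\setminus X^{(1)}$ is ``open, discrete, Lindel\"of, hence countable.'' Open subspaces of Lindel\"of spaces need not be Lindel\"of, and the set of isolated points of a Lindel\"of scattered space can be uncountable: the one-point Lindel\"ofication $L(\omega_1)$ of an uncountable discrete set (the example recalled in the introduction) is Lindel\"of, scattered of height $2$, has uncountably many isolated points, and is \emph{not} $\sigma$-compact. If your successor step were valid it would apply to $L(\omega_1)$ and to every Lindel\"of scattered space, since it nowhere uses the embedding into the $\Sigma$-product. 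So the successor step is not ``easy''; it is precisely where the $\Sigma$-product structure must enter, and your proposal supplies no argument there.

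Conversely, the limit case, which you flag as ``the only genuinely hard case,'' is the easy one: if $ht(X)$ is a limit ordinal, every $x\in X$ has a neighborhood $U_x$ whose closure in $X$ has smaller Cantor--Bendixson rank; that closure is closed in $X$, hence Lindel\"of and scattered, hence $\sigma$-compact by the inductive hypothesis, and a countable subcover of $\{U_x: x\in X\}$ finishes this case --- this sidesteps your (legitimate) worry that the open sets $X\setminus X^{(\alpha_n)}$ need not be Lindel\"of. The actual content of the proof lies in the successor step: one reduces to the case $X^{(\gamma)}=\{a\}$, covers $\prod_\gamma K_\gamma\setminus\{a\}$ (where $K_\gamma$ compactifies $Y_\gamma$) by open sets $U_{\xi,n}$ whose closures miss $a$, so that each $\overline{U}\cap X$ has smaller rank and is $\sigma$-compact by induction; one then invokes the new Alster-type Lemma \ref{Alster} to refine this cover to one that is block-point-finite in $X$, and finally Tkachuk's Lemma \ref{lemma_Tkachuk} to conclude that $\prod_\gamma K_\gamma\setminus X$ is a $G_\delta$ in a compact space, whence $X$ is $\sigma$-compact. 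None of this machinery, which is the heart of the matter, appears in your proposal.
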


Theorem \ref{main} solves Question 4.8 from \cite{Tk} and therefore it
automatically provides answers to Questions 4.1--4.6 as well as to Question 4.9 from that paper (cf. Corollaries \ref{main_cor} and \ref{main_cor_1}). Moreover,
as an immediate consequence of Corollary \ref{main_cor}, we give an affirmative answer to Question 4.5 from \cite{LT} (see Corollary \ref{corollary-delta-spaces} below).

The general scheme of our proof of Theorem \ref{main} is essentially the same as in \cite[proof of Theorem 3.2]{Tk}. A new ingredient is Lemma \ref{Alster} below. The lemma itself is new but its proof is very similar to the reasoning of Alster from \cite{A}.

\section{On Lindel\"of scattered subspaces of certain $\Sigma$-products}

For a given space $X$, by $X'$ we denote the set of all accumulation points of $X$. For an ordinal $\alpha$, we define the Cantor-Bendixson $\alpha$-th derivative $X^{(\alpha)}$ of $X$ recursively, as follows:
\begin{itemize}
 \item $X^{(0)}=X$
 \item $X^{(\alpha+1)}=(X^{(\alpha)})'$
 \item $X^{(\alpha)}=\bigcap_{\beta<\alpha} X^{(\beta)}$ if $\alpha$ is a limit ordinal.
\end{itemize}

A space $X$ is scattered if and only if there is an ordinal $\alpha$ such that $X^{(\alpha)}=\emptyset$. For a scattered space $X$, its \textit{Cantor-Bendixson rank} $ht(X)$ is the minimal ordinal number $\alpha$ for which $X^{(\alpha)}=\emptyset$.
Note that if $X$ is compact scattered, then $ht(X)$ is always a successor ordinal.

\begin{defin}
 Suppose that $X$ is a subspace of a space $Z$. Let $\mathscr{U}=\{U_t:t\in T\}$ be an indexed family of subsets of $Z$. We say that $\mathscr{U}$ is \textit{point-finite in $X$} if for every $x\in X$ the set $\{t\in T:x\in U_t\}$ is finite.
\end{defin}

\begin{defin}
Let $X$ be a subspace of a space $Z$.
Given a family $\mathscr{U}$ of subsets of $Z$, we say that $\mathscr{U}$ is \textit{block-point-finite in $X$} if for some set $\Gamma$ one can write $\mathscr{U}=\bigcup\{\mathscr{U}_\gamma:\gamma\in \Gamma\}$ where each family $\mathscr{U}_\gamma$ is countable and the indexed family
$\{\bigcup\mathscr{U}_\gamma:\gamma\in \Gamma\}$
is point-finite in $X$.
\end{defin}

Given an indexed family $\mathscr{A}=\{A_t:t\in T\}$ of subsets of a space $X$, we say that a family $\{U_t:t\in T\}$ is an \textit{open expansion} of $\mathscr{A}$ if for every $t\in T$, the set $U_t$ is open in $X$ and satisfies $A_t\subseteq U_t$. Suppose that $\mathscr{U}$ and $\mathscr{V}$ are two families of subsets of a space $X$. We say that $\mathscr{V}$ is an \textit{open refinement} of $\mathscr{U}$ if $\mathscr{V}$ consists of open subsets of $X$ and for every $V\in \mathscr{V}$ there is $U\in \mathscr{U}$ with $V\subseteq U$.

\medskip

The following fact was noted by Tkachuk  in \cite{Tk}.

\begin{lemma}\cite[Lemma 3.1]{Tk}\label{lemma_Tkachuk}
 Suppose that $Z$ is a space and $\{G_t:t\in T\}$ is a family of $G_\delta$-subsets of $Z$ such that the set $\bigcup\{G_t:t\in T\}$ is disjoint from a set $X\subseteq Z$. If the family $\{G_t:t\in T\}$ has an open expansion $\{V_t:t\in T\}$ which is point-finite in $X$, then there is a $G_\delta$-subset $G$ of $Z$ such that $\bigcup\{G_t:t\in T\}\subseteq G\subseteq Z\setminus X$.
\end{lemma}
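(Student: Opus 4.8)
The statement to prove is: given $Z$, a family $\{G_t : t\in T\}$ of $G_\delta$-subsets of $Z$ with $\bigcup_t G_t$ disjoint from $X\subseteq Z$, and an open expansion $\{V_t : t\in T\}$ that is point-finite in $X$, there is a $G_\delta$-set $G$ with $\bigcup_t G_t \subseteq G \subseteq Z\setminus X$.

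\begin{proof}[Proof sketch]
The plan is to build $G$ as a countable intersection of open sets, each of which is a union of open sets chosen to cover the $G_t$'s "one $\delta$-level at a time" while staying point-finite in $X$. First, for each $t\in T$ write $G_t = \bigcap_{n\in\NN} W_{t,n}$ where each $W_{t,n}$ is open in $Z$; without loss of generality we may assume $W_{t,n}\subseteq V_t$ for every $n$ (replace $W_{t,n}$ by $W_{t,n}\cap V_t$, which is still open and whose intersection over $n$ is still $G_t$ since $G_t\subseteq V_t$), and we may assume the sequence $(W_{t,n})_n$ is decreasing in $n$. Now for each $n\in\NN$ set
\[
H_n \;=\; \bigcup_{t\in T} W_{t,n},
\]
an open subset of $Z$, and put $G = \bigcap_{n\in\NN} H_n$, which is a $G_\delta$-subset of $Z$. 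Clearly $G_t = \bigcap_n W_{t,n}\subseteq \bigcap_n H_n = G$ for every $t$, so $\bigcup_t G_t\subseteq G$.

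It remains to check $G\cap X = \emptyset$; this is the only real point of the argument, and it is where point-finiteness is used. Fix $x\in X$. By point-finiteness in $X$ of $\{V_t : t\in T\}$, the set $F=\{t\in T : x\in V_t\}$ is finite. Since $W_{t,n}\subseteq V_t$, for any $t\notin F$ we have $x\notin W_{t,n}$ for all $n$; hence for each $n$, $x\in H_n$ forces $x\in W_{t,n}$ for some $t\in F$. As $F$ is finite, by pigeonhole there is a single $t_0\in F$ with $x\in W_{t_0,n}$ for infinitely many $n$; since $(W_{t_0,n})_n$ is decreasing, this gives $x\in W_{t_0,n}$ for \emph{all} $n$, i.e.\ $x\in \bigcap_n W_{t_0,n} = G_{t_0}\subseteq \bigcup_t G_t$. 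But $\bigcup_t G_t$ is disjoint from $X$ and $x\in X$, a contradiction. Therefore $x\notin G$, and since $x\in X$ was arbitrary, $G\cap X=\emptyset$, i.e.\ $G\subseteq Z\setminus X$.

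The main (indeed only) obstacle is the disjointness verification above, and the key trick there is the normalization $W_{t,n}\subseteq V_t$ together with monotonicity of the $W_{t,n}$ in $n$, which lets a finite-set pigeonhole argument upgrade "$x\in H_n$ for all $n$" into "$x$ lies in a single $G_{t_0}$". Everything else — extracting the $W_{t,n}$, intersecting with $V_t$, passing to a decreasing sequence — is routine.
\end{proof}
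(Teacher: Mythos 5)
Your proof is correct and is essentially the standard argument for this result (which the paper only cites from Tkachuk and does not reprove): shrink the open sets witnessing each $G_\delta$ into $V_t$, make them decreasing, take $G=\bigcap_n\bigcup_t W_{t,n}$, and use point-finiteness of $\{V_t\}$ in $X$ plus monotonicity to force any $x\in X\cap G$ into a single $G_{t_0}$, contradicting disjointness. No gaps; the normalization $W_{t,n}\subseteq V_t$ together with the decreasing assumption is exactly the point that makes the pigeonhole step work.
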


Although the next lemma seems to be new, its proof presented below is very similar to the proof of \cite[Proposition]{A} due to Alster (cf. also \cite[Lemma 3.5]{AK}).

\begin{lemma}\label{Alster}
 Let $X$ be a scattered subspace of a space $Z$ and let $\mathscr{U}$ be a family of open subsets of $Z$. Suppose that for every $x\in X$ the family $\{U\in \mathscr{U}:x\in \overline{U}\}$ is countable. Assume further that $\overline{U}\cap X$ is $\sigma$-compact (maybe empty) for all $U\in \mathscr{U}$. Then $\mathscr{U}$ has an open refinement $\mathscr{V}$ such that $\bigcup\mathscr{V}=\bigcup\mathscr{U}$, the intersection $\overline{V}\cap X$ is $\sigma$-compact (maybe empty) for all $V\in \mathscr{V}$ and $\mathscr{V}$ is block-point-finite in $X$.
\end{lemma}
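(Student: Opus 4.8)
The plan is to run a transfinite induction on the Cantor--Bendixson rank $ht(X)$ of the scattered space $X$, mimicking Alster's stratification argument. For the base step $ht(X)=0$ we have $X=\emptyset$ and any open refinement (say $\mathscr{U}$ itself) works vacuously. For the inductive step, suppose the statement holds for all scattered subspaces of rank $<\alpha$ and let $ht(X)=\alpha$. The key observation is that, since $X$ is scattered, the derived set $X'=X^{(1)}$ is a closed (in $X$) scattered subspace of strictly smaller rank, while $X\setminus X'$ is the set of isolated points of $X$, hence a relatively discrete subset.

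First I would handle the ``large'' part coming from $X'$. Consider the subfamily $\mathscr{U}'=\{U\in\mathscr{U}:\overline{U}\cap X'\neq\emptyset\}$; restricting to $Z$ and to the scattered space $X'$ of rank $<\alpha$, the hypotheses of the lemma are inherited ($\overline U\cap X'$ is closed in the $\sigma$-compact $\overline U\cap X$, hence $\sigma$-compact; and each $x\in X'$ lies in the closure of only countably many members of $\mathscr{U}'\subseteq\mathscr{U}$), so by the inductive hypothesis $\mathscr{U}'$ has an open refinement $\mathscr{W}$ with $\bigcup\mathscr{W}=\bigcup\mathscr{U}'$, each $\overline{W}\cap X'$ $\sigma$-compact, and $\mathscr{W}$ block-point-finite in $X'$. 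The point is then to \emph{shrink} each $W\in\mathscr{W}$ slightly so that its closure meets $X$ in a $\sigma$-compact set and so that the resulting family is block-point-finite in all of $X$, not just in $X'$: each $x\in X\setminus X'$ is isolated in $X$, so it has a neighborhood $N_x$ in $X$ with $N_x\cap X=\{x\}$, and one can use these to cut down the closures — here is where the open-expansion/$G_\delta$ machinery of Lemma \ref{lemma_Tkachuk} is the natural tool, used to absorb the isolated points that would otherwise destroy point-finiteness.

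Second, I would treat the remaining part $\bigcup\mathscr{U}\setminus\bigcup\mathscr{W}$, which only needs to be covered ``near'' isolated points of $X$; for each isolated point $x\in X\setminus X'$ choose $U_x\in\mathscr{U}$ and shrink it to an open $V_x\ni x$ with $V_x\cap X\subseteq\{x\}$ inside $\overline{U_x}$ — then $\overline{V_x}\cap X$ is at most a point, hence $\sigma$-compact, and the family $\{V_x\}$ together with suitable pieces covering $\bigcup\mathscr{U}\setminus(X\cup\bigcup\mathscr{W})$ forms a block-point-finite family (index by a single ``block'' carefully, or note each such $V_x$ contributes countably/finitely). Finally, combine $\mathscr{W}$ (suitably shrunk) with $\{V_x\}$ and with open sets refining the part of $\bigcup\mathscr{U}$ disjoint from $X$ to get the desired $\mathscr{V}$; verifying $\bigcup\mathscr{V}=\bigcup\mathscr{U}$ is routine bookkeeping.

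The main obstacle I anticipate is the bookkeeping that makes $\mathscr{V}$ \emph{block}-point-finite in the whole of $X$: block-point-finiteness is preserved in a delicate way under the induction, because at limit stages of the rank (or when combining the contributions from $X'$ and from the isolated points) one must arrange the countable ``blocks'' $\mathscr{V}_\gamma$ so that $\{\bigcup\mathscr{V}_\gamma:\gamma\in\Gamma\}$ stays point-finite at every point of $X$ — including points of $X'$, where countably many members of $\mathscr{U}$ can accumulate. Controlling this requires carefully choosing, for each $x\in X'$, a single block that ``absorbs'' all of the (countably many) sets whose closure contains $x$, and invoking Lemma \ref{lemma_Tkachuk} to replace a point-finite open expansion by an honest $G_\delta$ separating set so that the shrinking does not reintroduce infinitely many sets at any point of $X$. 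The Lindel\"of and scattered hypotheses enter precisely here, via the countability condition ``$\{U\in\mathscr{U}:x\in\overline U\}$ is countable'' for each $x$, which is what allows each block to be countable.
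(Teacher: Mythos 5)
There is a genuine gap, and it starts with the induction scheme itself. You induct on the Cantor--Bendixson rank of $X$ and reduce to the derived set $X'$, asserting that $X'$ has strictly smaller rank. That is only true when $ht(X)$ is finite: for any infinite $\alpha$ one has $1+\beta<\alpha$ for all $\beta<\alpha$ (when $\alpha$ is limit) and in general $ht(X')=ht(X)$ whenever $ht(X)\geq\omega$, so passing to $X'$ does not decrease the rank and the induction does not close. In the proof of Theorem \ref{main} the authors do induct on rank, but there the limit stages are handled using the Lindel\"of property of $X$ to extract a countable subcover of sets of smaller rank; Lemma \ref{Alster} has no Lindel\"of hypothesis (and is applied to a family $\mathscr{U}$ living on the whole product, not on $X$), so that escape route is unavailable. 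The paper's proof of Lemma \ref{Alster} instead inducts on the \emph{cardinality} $|\mathscr{U}|$ of the family, with the countable case trivial (one block), which is a genuinely different and essential choice.

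The second, more serious gap is that the one point you correctly identify as the crux --- arranging block-point-finiteness of the combined family at every point of $X$ --- is exactly the point your sketch does not resolve. Putting all (countably many) sets whose closures contain a given $x$ into ``a single block that absorbs'' them cannot work as stated: the blocks for different points overlap, a single $U$ lands in many blocks, and the unions $\bigcup\mathscr{V}_\gamma$ then fail point-finiteness elsewhere. The paper's mechanism for this is the transfinite stratification $\mathscr{U}_\beta$ built from the finite top Cantor--Bendixson levels $Z(\bigcap\mathscr{F})$ of finite intersections of the compact pieces $K_n(U)$ of $\overline{U}\cap X$, together with a stabilization argument for the ranks of the decreasing compacta $C_m=K_{n_1}(W_1)\cap\dotsb\cap K_{n_m}(W_m)$; this is where scatteredness of the $\sigma$-compact traces is actually used, and none of it appears in your outline. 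Finally, invoking Lemma \ref{lemma_Tkachuk} inside this proof to ``absorb isolated points'' is misplaced: that lemma only converts a point-finite open expansion of $G_\delta$'s into a single $G_\delta$ separator, and in the paper it is applied \emph{after} Lemma \ref{Alster}, in the proof of Theorem \ref{main}; it contributes nothing to establishing point-finiteness.
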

\begin{proof}
 We proceed by induction on the size $|\mathscr{U}|$ of the family $\mathscr{U}$. If $\mathscr{U}$ is countable, there is nothing to do for we can simply take $\mathscr{V}=\mathscr{U}$. Suppose that $|\mathscr{U}|=\kappa>\omega$ and assume that our result holds for families of cardinality less than $\kappa$. Enumerate $\mathscr{U}=\{U_\alpha:\alpha<\kappa\}$.
 For each $U\in \mathscr{U}$ let us fix a countable family $\{K_n(U):n\in \omega\}$ of compact sets (maybe empty) such that
\begin{equation}\label{equation_sigma_compact}
\overline{U}\cap X=\bigcup_{n\in\omega}K_n(U).
\end{equation}

Given any subfamily $\mathscr{W}$ of $\mathscr{U}$, we set
$$\mathscr{W}'=\{K_n(U):U\in \mathscr{W} \mbox { and }n\in\omega\}$$
 For a compact scattered set $F$ we define
\begin{equation*}
Z(F)=
  \left\{\begin{aligned}
  &\emptyset &&\mbox{if }F=\emptyset\\
&F^{(\gamma)}  &&\mbox{if }F\neq \emptyset\mbox{ and } ht(F)=\gamma+1.
\end{aligned}
 \right.
\end{equation*}
Note that $Z(F)$ is a finite set.
If $\mathscr{W}$ is a family of compact scattered sets, then we set
$$Z(\mathscr{W})=\bigcup\left\{Z\left(\bigcap\mathscr{F}\right): \mathscr{F}\in [\mathscr{W}]^{<\omega}\right\}.$$
Define a sequence $\{\mathscr{U}_\beta:\beta<\kappa\}$ of subfamilies of $\mathscr{U}$ as follows:
\begin{equation*}
 \mathscr{U}_\beta=\left\{\begin{aligned}
                & \{U_\alpha:\alpha\leq\beta\}\cup \{U\in \mathscr{U}:\overline{U}\cap
                Z(\mathscr{U}'_\eta)\neq\emptyset\} && \mbox{ if } \beta=\eta+1\\
                & \bigcup\{\mathscr{U}_\alpha:\alpha<\beta\} && \mbox{ if } \beta \mbox{ is a limit ordinal.}
               \end{aligned}
\right.
\end{equation*}

\begin{claimnumber}\label{Claim1}
For each $\beta<\kappa$ we have $\mathscr{U}_\beta\subseteq \mathscr{U}_{\beta+1}$ and thus the sequence $\{\mathscr{U}_\beta:\beta<\kappa\}$ is increasing.
\end{claimnumber}
\begin{proof}
First note that if $\mathscr{U}_\alpha\subseteq\mathscr{U}_\gamma$, then
$\mathscr{U}'_\alpha\subseteq\mathscr{U}'_\gamma$ whence $Z(\mathscr{U}'_\alpha)\subseteq Z(\mathscr{U}'_\gamma)$.
We prove the claim inductively with respect to $\beta<\kappa$. Fix $\beta<\kappa$ and suppose that $\mathscr{U}_\alpha\subseteq \mathscr{U_\gamma}$ for $\alpha<\gamma\leq \beta$. We need to show that $\mathscr{U}_\beta\subseteq \mathscr{U}_{\beta+1}$. Let us consider the following two cases:

\smallskip

\textit{Case 1:} $\beta=\eta+1$ for some ordinal $\eta$. In this case
\begin{align*}
\mathscr{U}_{\beta+1}&=\{U_\alpha:\alpha\leq\beta+1\}\cup\{U\in \mathscr{U}:\overline{U}\cap Z(\mathscr{U}'_{\eta+1})\neq \emptyset\} \mbox{ and}\\
\mathscr{U}_{\beta}&=\{U_\alpha:\alpha\leq\beta\}\cup\{U\in \mathscr{U}:\overline{U}\cap Z(\mathscr{U}'_{\eta})\neq \emptyset\}
\end{align*}
Since $\mathscr{U}'_{\eta}\subseteq \mathscr{U}'_{\eta+1}$ by the inductive assumption, we are done by the observation made at the beginning of the proof.

\smallskip

\textit{Case 2:} $\beta$ is a limit ordinal. In this case
\begin{align*}
\mathscr{U}_{\beta+1}&=\{U_\alpha:\alpha\leq\beta+1\}\cup\{U\in \mathscr{U}:\overline{U}\cap Z(\mathscr{U}'_{\beta})\neq \emptyset\} \mbox{ and}\\
\mathscr{U}_{\beta}&=\bigcup\{\mathscr{U}_\alpha:\alpha<\beta\}.
\end{align*}
Pick $U\in \mathscr{U}_\beta$ and let $\xi<\beta$ be the first ordinal for which $U\in \mathscr{U}_\xi$. It is easy to see that $\xi$ must me a successor ordinal and thus either
$U=U_\alpha$ for some $\alpha\leq\xi<\beta$, in which case we are done, or $\overline{U}\cap Z(\mathscr{U'_\zeta})\neq\emptyset$, where $\xi=\zeta+1$. But $Z(\mathscr{U}'_\zeta)\subseteq Z(\mathscr{U}'_\beta)$, by the inductive assumption, so $U\in \mathscr{U}_{\beta+1}$.
\end{proof}

It is easy to see that
\begin{equation}\label{union is U}
 \mathscr{U}=\bigcup\{\mathscr{U}_\beta:\beta<\kappa\}.
\end{equation}

Let us show the following:

\begin{claimnumber}\label{Claim2}
If $\beta<\kappa$, then $|\mathscr{U}_\beta|\leq |\beta|\cdot\omega<\kappa.$
\end{claimnumber}
\begin{proof}
Note that $|\mathscr{U}'_\eta|\leq |\mathscr{U}_\eta|\cdot\omega$ for every $\eta<\kappa$. Thus, we can prove Claim \ref{Claim2} inductively with respect to
$\beta<\kappa$ in a similar way as in the proof of \cite[Proposition]{A} (cf. \cite[Lemma 3.5, Claim]{AK}). Let us enclose the argument for the convenience of the reader.

If $\beta$ is a limit ordinal then $\mathscr{U}_\beta=\bigcup\{\mathscr{U}_\alpha:\alpha<\beta\}$ and we are done. Suppose that $\beta=\eta+1$ for some ordinal $\eta$ and suppose that $|\mathscr{U}_\alpha|\leq|\alpha|\cdot\omega$ for $\alpha\leq \eta$. It is sufficient to check that
\begin{equation}\label{eq_claim}
 |\{U\in \mathscr{U}:\overline{U}\cap Z(\mathscr{U}'_\eta)\neq\emptyset\}|\leq |\eta|\cdot\omega.
\end{equation}
Since $|\mathscr{U}'_\eta|\leq |\mathscr{U}_\eta|\cdot\omega\leq |\eta|\cdot\omega$, we get
$$|Z(\mathscr{U}'_\eta)|=\Big| \bigcup\left\{Z\left(\bigcap\mathscr{F}\right): \mathscr{F}\in [\mathscr{U'_\eta}]^{<\omega}\right\}\Big|\leq |\eta|\cdot\omega.$$
Note that $Z(\mathscr{U}'_\eta)\subseteq X$, by \eqref{equation_sigma_compact} and the definition of $Z(\mathscr{U}'_\eta)$. So
\eqref{eq_claim} follows from our assumption that for every $x\in X$ the family $\{U\in \mathscr{U}:x\in \overline{U}\}$ is countable.
\end{proof}

For each $\beta<\kappa$ define

$$\mathscr{W}_\beta=\mathscr{U}_{\beta+1}\setminus \mathscr{U}_\beta.$$
Using \eqref{union is U} and definitions of the families $\mathscr{U}_\beta$ and $\mathscr{W}_\beta$, it is easy to see that
$$\mathscr{U}=\bigcup\{\mathscr{W}_\beta:\beta<\kappa\}.$$ Moreover, by Claim \ref{Claim2}, $|\mathscr{W}_\beta|<\kappa$ for all $\beta<\kappa$.
Applying the inductive assumption to $\mathscr{W}_\beta$ we find a family $\mathscr{V}_\beta$ of open subsets of $Z$ such that:
\begin{itemize}
 \item $\mathscr{V}_\beta$ refines $\mathscr{W}_\beta$ and $\bigcup\mathscr{V}_\beta=\bigcup\mathscr{W}_\beta$,
 \item $\overline{V}\cap X$ is $\sigma$-compact for every $V\in \mathscr{V}_\beta$,
 \item $\mathscr{V}_\beta$ is block-point-finite in $X$.
\end{itemize}
Let
$$\mathscr{V}=\bigcup\{\mathscr{V}_\beta:\beta<\kappa\}.$$
Clearly $\overline{V}\cap X$ is $\sigma$-compact for every $V\in \mathscr{V}$.
It is also easy to see that $\mathscr{V}$ refines $\mathscr{U}$ and $\bigcup\mathscr{U}=\bigcup\mathscr{V}$. It remains to show that $\mathscr{V}$ is block-point-finite in $X$. Each family $\mathscr{V}_\beta$ is block-point-finite in $X$, so for each $\beta<\kappa$, we can find a set $\Gamma_\beta$ such that $\mathscr{V}_\beta=\bigcup\{\mathscr{V}^\gamma_\beta:\gamma\in \Gamma_\beta\}$, where each family $\mathscr{V}^\gamma_\beta$ is countable and the family
$$\mathscr{R}_\beta=\left\{\bigcup\mathscr{V}^\gamma_\beta:\gamma\in \Gamma_\beta\right\}$$
is point-finite in $X$.

We may assume that the sets $\Gamma_\beta$ are pairwise disjoint, for $\beta<\kappa$.
Let $\Gamma=\bigcup_{\beta<\kappa}\Gamma_\beta$. For $\gamma\in \Gamma$ define
$$\mathscr{V}^\gamma=\mathscr{V}^\gamma_\beta,$$
where $\beta<\kappa$ is the unique ordinal such that $\gamma\in \Gamma_\beta$.
We have $\mathscr{V}=\bigcup\{\mathscr{V}^\gamma:\gamma\in \Gamma\}$ and each family $\mathscr{V}^\gamma$ is countable.

We will show that the indexed family
$$\mathscr{R}=\left\{ \bigcup \mathscr{V}^\gamma:\gamma\in \Gamma \right\}$$
is point-finite in $X$ (and hence $\mathscr{V}$ is block-point-finite in $X$).

Striving for a contradiction, suppose that there is $x\in X$ for which the set
$\{\gamma\in \Gamma:x\in \bigcup\mathscr{V}^\gamma\}$
is infinite.
Since $\mathscr{R}=\bigcup\{\mathscr{R}_\beta:\beta<\kappa\}$ and each family $\mathscr{R}_\beta$ is point-finite, there
is an increasing sequence of ordinals $\beta_1<\beta_2<\ldots$ and a sequence of indices $\gamma_1,\gamma_2,\ldots$ such that
$$\gamma_k\in \Gamma_{\beta_k}\quad  \mbox{and}\quad x\in \bigcup \mathscr{V}^{\gamma_k}_{\beta_k} \quad\mbox{for all } k.$$
For each $k$, find $V_k\in \mathscr{V}^{\gamma_k}_{\beta_k}$ such that $x\in V_k$. Since $\mathscr{V}_{\beta_k}$ refines $\mathscr{W}_{\beta_k}$, we can find $W_k\in \mathscr{W}_{\beta_k}=\mathscr{U}_{(\beta_k+1)}\setminus \mathscr{U}_{\beta_{k}}$ with $x\in W_k$. Since $W_k\in \mathscr{U}_{(\beta_k+1)}\subseteq \mathscr{U}$ and $x\in X$, according to \eqref{equation_sigma_compact}, for each $k\geq 1$, there is $n_k\in\omega$ such that $x\in K_{n_k}(W_k)$. For $m=1,2,\ldots$, define
$$C_m=K_{n_1}(W_1)\cap\dotsb\cap K_{n_m}(W_m).$$
The sets $C_m$ are nonempty compact and scattered, so for each $m$, there is an ordinal $\zeta_m$ such that $ht(C_m)=\zeta_m+1$. The sequence $C_1\supseteq C_2\supseteq\ldots$ is decreasing, so $\zeta_{i+1}\leq \zeta_i$. Hence, there must be $m$ such that $\zeta_{i+1}=\zeta_i$ for $i\geq m$. From the choice of $m$ and using $\overline{W_{m+2}}\supseteq K_{n_{m+2}}(W_{m+2})$, we get
$$\overline{W_{m+2}}\cap Z(C_m)\supseteq Z(C_{m+2}).$$
Since the latter set is nonempty, we have
$$\overline{W_{m+2}}\cap Z(K_{n_1}(W_1)\cap\dotsb\cap K_{n_m}(W_m))\neq\emptyset.$$
Since $K_{n_k}(W_k)\in \mathscr{U}'_{(\beta_k+1)}$ and $\beta_1<\beta_2<\ldots$, we infer from Claim \ref{Claim1} that
$$K_{n_k}(W_k)\in \mathscr{U}'_{(\beta_m+1)} \mbox{ for all } k\leq m.$$
It follows that $W_{m+2}\in \mathscr{U}_{(\beta_m+2)}$. On the other hand $W_{m+2}\in \mathscr{W}_{\beta_{m+2}}$ so
$$W_{m+2}\notin \mathscr{U}_{\beta_{m+2}}\supseteq \mathscr{U}_{(\beta_{m+1}+1)}\supseteq \mathscr{U}_{(\beta_{m}+2)}$$
(cf. Claim \ref{Claim1}),
which is a contradiction.
\end{proof}

We are now ready to give a proof of our main result. The reasoning presented below is modelled on the proof of Theorem 3.2 in \cite{Tk}.
Let us fix some notation.
Given a point $z$ in a topological space $T$, by $\chi(z,T)$ we denote the minimal size of a local base at $z$ in the space $T$. A space $T$ is \textit{first-countable} if $\chi(z,T)\leq\omega$ for all $z\in T$.

\begin{proof}[Proof of Theorem \ref{main}]
Let $\{Y_\gamma:\gamma\in \Gamma\}$ be a family of first-countable spaces and let $a=(a_\gamma)_{\gamma\in \Gamma}\in \prod_{\gamma\in \Gamma}Y_\gamma$.
We proceed by induction on the Cantor-Bendixson rank $ht(X)$ of $X$. Suppose that $ht(X)=\alpha$ and the result holds true for spaces of rank $\beta<\alpha$. If $\alpha$ is a limit ordinal then we are easily done. Indeed, in this case each point $x\in X$ has a neighborhood $U_x$ with $ht(\overline{U_x})<\alpha$. By the inductive assumption, $\overline{U_x}$ is $\sigma$-compact being Lindel\"of. Since $X$ is Lindel\"of, the open cover $\{U_x:x\in X\}$ has a countable subcover $\{U_{x_i}:i=1,2,\ldots\}$. It follows that $X=\bigcup\{\overline{U_{x_i}}:i=1,2,\ldots\}$ is $\sigma$-compact being a countable union of $\sigma$-compact sets.

 Suppose that $\alpha=\gamma+1$ for some ordinal $\gamma$ and that the result is true for spaces of rank $\beta\leq \gamma$. The set $X^{(\gamma)}$ is a closed and discrete subset of $X$, so by the Lindel\"of property of $X$, it is countable. Therefore we can assume, without loss of generality that
$X^{(\gamma)}=\{p\}$ is a singleton. Since $p\in \Sigma(\prod_{\gamma\in \Gamma}Y_\gamma,a)$, we have $\Sigma(\prod_{\gamma\in \Gamma}Y_\gamma,a)=\Sigma(\prod_{\gamma\in \Gamma}Y_\gamma,p)$. Thus, we can also assume that $p=a$ (see \cite[p.175]{Tk}).
For each $\gamma\in \Gamma$ let $K_\gamma$ be a compactification of $Y_\gamma$. We have $\chi(a_\gamma,K_\gamma)=\chi(a_\gamma,Y_\gamma)\leq \omega$ (cf. \cite[p.175]{Tk}).
Hence, for each $\gamma\in \Gamma$, we can find a countable family $\{W_{\gamma,n}:n\in \omega\}$ of open subsets of $K_\gamma$ such that
\begin{equation}\label{equation_intersection}
\overline{W_{\gamma,n+1}}\subseteq W_{\gamma,n} \mbox{ and }\bigcap_{n\in \omega}W_{\gamma,n}=\bigcap_{n\in \omega}\overline{W_{\gamma,n}}=\{a_\gamma\}.
\end{equation}
For $n\in\omega$ and $\xi\in \Gamma$ we set
$$U_{\xi,n}=\{z\in \prod_{\gamma\in\Gamma}K_\gamma:z(\xi)\notin \overline{W_{\xi, n}}\}.$$
Let
$$\mathscr{U}=\{U_{\xi,n}:\xi\in \Gamma,\;n\in \omega\}$$
and note that $\mathscr{U}$ consists of open subsets of $\prod_{\gamma\in \Gamma}K_\gamma$.
We infer from \eqref{equation_intersection} that
if $x\in \Sigma(\prod_{\gamma\in \Gamma}K_\gamma,a)$, in particular if $x\in X$, then the family $\{U\in\mathscr{U}:x\in \overline{U}\}$ is countable.
 Moreover,
 \begin{equation}\label{union}
  \bigcup \mathscr{U}=\prod_{\gamma\in \Gamma}K_\gamma\setminus \{a\}.
 \end{equation}
 If $U\in \mathscr{U}$, then $\overline{U}\cap X$ is Lindel\"of scattered being a closed subspace of a Lindel\"of scattered space $X$.
 Since $a\notin \overline{U}$, we have $ht(\overline{U}\cap X)<\alpha$ whence $\overline{U}\cap X$ is $\sigma$-compact by the inductive assumption.
 We can therefore apply Lemma \ref{Alster} to the family $\mathscr{U}$. Consequently, we get a family $\mathscr{V}$ of open subsets of $\prod_{\gamma\in \Gamma}K_\gamma$ that refines $\mathscr{U}$ and has the following properties:
 \begin{enumerate}[(a)]
  \item $\bigcup\mathscr{V}=\bigcup\mathscr{U}$
  \item $\overline{V}\cap X$ is $\sigma$-compact and
  \item $\mathscr{V}$ is block-point-finite in $X$.
\end{enumerate}
By (c), we can write $\mathscr{V}=\bigcup\{\mathscr{V}_t:t\in T\}$ where each family $\mathscr{V}_t$ is countable and the family $\{\bigcup \mathscr{V}_t:t\in T\}$ is point-finite in $X$. For $t\in T$, let
$$V_t=\bigcup\mathscr{V}_t\quad \mbox{and}\quad H_t=\bigcup\left\{\overline{V}:V\in \mathscr{V}_t\right\}.$$
Since the family $\mathscr{V}_t$ is countable, we infer from (b) that the set $H_t\cap X$ is $\sigma$-compact. Hence, the set
$$G_t=V_t\setminus X=V_t\cap \left(\prod_{\gamma\in \Gamma}K_\gamma\setminus (H_t\cap X)\right)$$
is a $G_\delta$-subset of $\prod_{\gamma\in \Gamma}K_\gamma$ which is disjoint from $X$, for all $t\in T$. The family $\{V_t:t\in T\}$ is an open expansion of the family $\{G_t:t\in T\}$ which is point-finite in $X$. According to Lemma \ref{lemma_Tkachuk}, there is a $G_\delta$-subset $G$ of $\prod_{\gamma\in \Gamma}K_\gamma$ such that
$$\bigcup\{G_t:t\in T\}\subseteq G \subseteq \prod_{\gamma\in \Gamma}K_\gamma\setminus X.$$
It follows from $\eqref{union}$ and the property (a) that $\bigcup\{V_t:t\in T\}=\prod_{\gamma\in \Gamma}K_\gamma\setminus \{a\}$. Since $a\in X$, we get
$$\bigcup\{G_t:t\in T\}= \left(\bigcup\{V_t:t\in T\}\right)\setminus X =\prod_{\gamma\in \Gamma}K_\gamma\setminus X.$$
Hence, $G=\prod_{\gamma\in \Gamma}K_\gamma\setminus X$ is a $G_\delta$-subset of the compact space $\prod_{\gamma\in \Gamma}K_\gamma$ whence $X$ is $\sigma$-compact.
\end{proof}
Theorem \ref{main} implies the following result which, in particular, answers a question of Leiderman and Tkachuk from \cite{LT} (see \cite[Question 4.5]{LT}). We refer the reader to \cite{KL} and \cite{LT} for a definition and basic facts about $\Delta$-spaces.
\begin{cor}\label{corollary-delta-spaces}
If $X$ is a Lindel\"of scattered subspace of a $\Sigma$-product $\Sigma(\mathbb{R}^\Gamma)$, then $X$ is a $\Delta$-space.
\end{cor}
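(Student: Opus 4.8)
The plan is a short deduction from what has already been established. By Corollary~\ref{main_cor}, the space $X$ is $\sigma$-compact, so it is now a $\sigma$-compact scattered subspace of a $\Sigma$-product of real lines. I would then invoke the known fact --- available in \cite{LT} (cf.\ \cite{KL}), and in fact exactly the $\sigma$-compact instance of the present corollary --- that every $\sigma$-compact scattered subspace of a $\Sigma$-product of real lines is a $\Delta$-space; \cite[Question~4.5]{LT} asked precisely for the general Lindel\"of version. Feeding the $\sigma$-compactness supplied by Corollary~\ref{main_cor} into that result immediately yields that $X$ is a $\Delta$-space, which is the assertion. In this sense the corollary really is an immediate consequence of Corollary~\ref{main_cor}: all the content sits in Theorem~\ref{main}.

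It is worth recording why the embedding into $\Sigma(\RR^\Gamma)$ cannot simply be dropped, i.e.\ why scatteredness and $\sigma$-compactness do not by themselves suffice. The ordinal space $[0,\omega_1]$ is compact and scattered yet fails to be a $\Delta$-space: split $\omega_1$ into countably many pairwise disjoint stationary sets $\{S_n:n\in\omega\}$ and put $D_m=\bigcup_{k>m}S_k$, so that $(D_m)$ is a decreasing sequence of subsets with empty intersection. Any open subset of $[0,\omega_1]$ containing a stationary set is co-countable --- it contains a tail $(\zeta,\omega_1)$ by a pressing-down argument --- so each open expansion of $D_m$ is co-countable, and no decreasing sequence of such expansions can have empty intersection. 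Hence the cited $\Delta$-space theorem genuinely uses the structure of the compact pieces of $X$ as Corson compacta (monolithic, of countable tightness); the only point requiring any care in the present argument is to invoke that theorem in exactly this form --- or, if one prefers to assemble it from parts, to use that every compact scattered subset of $\Sigma(\RR^\Gamma)$ is a $\Delta$-space together with a preservation result for countable increasing unions of closed $\Delta$-subspaces, both of which are of the type recorded in \cite{LT} and \cite{KL}.

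I do not anticipate any real obstacle here: the substantive work is Theorem~\ref{main}, and Corollary~\ref{corollary-delta-spaces} is obtained by pairing Corollary~\ref{main_cor} with an already-known structural fact about $\Delta$-spaces.
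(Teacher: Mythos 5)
Your proposal is correct and takes essentially the same route as the paper: the published proof is a one-line application of Theorem \ref{main} (to get $\sigma$-compactness) followed by citations of the known facts that compact scattered subspaces of $\Sigma(\mathbb{R}^\Gamma)$ are $\Delta$-spaces and that the $\Delta$-property is preserved by countable unions of closed subspaces. Your aside on $[0,\omega_1]$ is correct but not needed for the argument.
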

\begin{proof}
Apply Theorem \ref{main}, \cite[Theorem 3.7]{KL1} and \cite[Proposition 2.2]{KL}.
\end{proof}

\section*{Acknowledgements}
The authors were partially supported by
Fundaci\'{o}n S\'{e}neca - ACyT Regi\'{o}n de Murcia project 21955/PI/22, Agencia Estatal de Investigación (Government of Spain) and Project PID2021-122126NB-C32 funded by
MICIU/AEI /10.13039/\\
501100011033/ and FEDER A way of making Europe (A. Avil\'es and M. Krupski); European Union - NextGenerationEU funds through Mar\'{i}a Zambrano fellowship and
the NCN (National Science Centre, Poland) research Grant no.\\
2020/37/B/ST1/02613 (M. Krupski)

\bibliographystyle{siam}
\bibliography{bib.bib}
\end{document}